\author{Nikola Kamburov and Boyan Sirakov}
\address{Nikola Kamburov, Facultad de Matem\'aticas, Pontificia Universidad Cat\'olica de Chile, Avenida Vicu\~na Mackenna 4860, Santiago 7820436, Chile}
\email{nikamburov@mat.uc.cl}
\address{Boyan Sirakov, Departamento de Matematica, PUC-Rio, Rua Marques de Sao Vicente 225, G\'avea, Rio de Janeiro -- CEP
22451-900, Brazil}
\email{bsirakov@mat.puc-rio.fr}
\title[Uniform a priori estimates for the Lane--Emden equation in the plane]{Uniform a priori estimates for positive solutions of the Lane--Emden equation in the plane}
\newtheorem{theorem}{Theorem}[section]
\newtheorem{prop}[theorem]{Proposition}
\def\D{\Delta}
\def\O{\Omega}
\def\d{\delta}
\newcommand{\real}{\mathbb{R}}
\newcommand{\del}{\partial}
\begin{document}

\begin{abstract}
We prove that positive solutions of the Lane-Emden equation in a two-dimensional smooth bounded domain are uniformly bounded for all large exponents.
\end{abstract}

\maketitle
\bibliographystyle{plain}

\section{Introduction}
This paper is devoted to the classical Lane--Emden equation
\begin{equation}\label{LE}
\left\{\begin{array}{rcl}
\Delta u  + u^p =0   &\text{in} &\Omega, \quad  p>1  \\
u>0 & \text{in }&\O \\
u  = 0  &\text{on}& \del \Omega.
\end{array}
\right.
\end{equation}
in a bounded domain $\O\subset \real^n$, with a $C^2$-smooth boundary. Our results concern the two-dimensional case, $n=2$.

The Lane--Emden equation models the mechanical structure of self-gravitating spheres, and in particular, the structure of stars in astrophysics -- see \cite{chandrasekhar1957introduction} and \cite{horedt1986exact} for a physical introduction. This equation is also the simplest superlinear equation of second order, and as such has been the object of an enormous number of theoretical studies.

In their classical work \cite{ambrosetti1973dual} Ambrosetti and Rabinowitz showed that a solution of \eqref{LE} can be constructed by viewing the problem as the Euler-Lagrange equation associated with the functional
\begin{equation*}
I[v] = \int_{\O}\frac{|\nabla v|^2}{2} - \frac{|v|^{p+1}}{p+1} ~dx
\end{equation*}
on the Sobolev space $H^1_0(\Omega)$, and by applying the Mountain Pass Theorem.  Alternatively, a solution of \eqref{LE} can be constructed by solving the constrained minimization problem
\begin{equation}\label{CO}
\text{minimize} \quad J[v]:= \frac{1}{2} \int_{\Omega} |\nabla v|^2 ~dx \quad \text{among all} \quad v\in H_0^1(\O) \quad \text{with}\quad \int_{\O} |v|^{p+1} ~dx=1,
\end{equation}
and then scaling the obtained minimizer appropriately. Due to Sobolev embeddings, these approaches work in any space dimension $n$, as long as $p$ is below the critical exponent $p_c$:
\begin{align*}
p_c =  \frac{n+2}{n-2} \quad \text{for} \quad n\geq 3 \quad \text{and} \quad p_c = \infty \quad \text{for} \quad n=2.
\end{align*}
On the other hand, it is classically known \cite{pohozaev} that when $\Omega$ is \emph{star-shaped}, there are no nontrivial solutions in the range $p>p_c$. Thus, it becomes a compelling question to study the behaviour of solutions of \eqref{LE} as $p\nearrow p_c$.

Observe that solutions obtained by \eqref{CO} have \emph{least energy}, among all possible solutions. Whether solutions of \eqref{LE} of higher energies exist depends strongly on the geometry and the topology of the domain: see for instance Dancer \cite{dancer1988effect, dancer1990effect}, Benci--Cerami \cite{benci1991effect}. Their nonuniqueness results gave a negative answer to the original question of Gidas, Ni and Nirenberg (\cite[pp.224]{gidas1979symmetry}) for many types of domains.

On the other hand, it is known, by ODE analysis and the classical symmetry result of \cite{gidas1979symmetry}, that there is a unique solution of \eqref{LE} if $\Omega$ is a ball. A uniqueness result is also available for domains close to a ball (see \cite{zou1994effect}), and for domains $\O\subseteq\real^n$ which are both symmetric and convex in each one of the $n$ orthogonal directions: for all $p>1$ when $n=2$ (\cite{dancer1988effect, DamGroPac}), and  for $p$ close to $p_c$ when $n\ge3$ (\cite{grossi2000uniqueness}).

A long-standing and largely unsolved open question is whether \eqref{LE} has a unique positive solution if $\Omega$ is convex. It was conjectured in \cite{dancer1988effect} that the answer is affirmative. As of today, this conjecture has been verified only if $p$ is close to $1$ or  within the class of least-energy solutions in $2$ dimensions, by C.S. Lin \cite{lin1994uniqueness}.
\bigskip

Returning to the question of asymptotic behaviour as $p$ approaches the critical exponent, it is known that in dimension $n\geq 3$ the solutions of \eqref{LE} blow-up in the $L^{\infty}$-norm as $p\to p_c$, as demonstrated by Rey \cite{rey1990role} and Han \cite{han1991asymptotic}. These papers contain in addition precise asymptotics for the least-energy solutions.

The situation when $n=2$ turns out to be different in this regard. Ren and Wei \cite{ren1994two} showed that \emph{the least-energy solutions stay uniformly bounded as} $p\to \infty$. Whether this property is valid for arbitrary solutions has been an open problem until now.

Our main result gives an affirmative answer.

\begin{theorem}\label{main} Let $\Omega\subset\real^2$ be a bounded domain with $C^2$-boundary and let $p_0>1$. There exists a constant $C$, depending only on $p_0$ and $\Omega$, such that for all $p\geq p_0$ any solution $u_p\in C^{2}(\Omega)\cap C^0(\overline{\O})$ of \eqref{LE} satisfies:
\[
\|u_p\|_{L^{\infty}(\Omega)} \leq C.
\]
\end{theorem}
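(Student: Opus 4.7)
The plan is to argue by contradiction: suppose there exist exponents $p_k \geq p_0$ and solutions $u_k$ of \eqref{LE} with $M_k := \|u_k\|_{L^\infty(\Omega)} \to \infty$, attained at $x_k \in \Omega$. The case of bounded $p_k$ is classical: passing to a subsequence with $p_k \to p_\infty \in (1,\infty)$, the rescaling $w_k(y) := u_k(x_k + \varepsilon_k y)/M_k$ with $\varepsilon_k := M_k^{-(p_k-1)/2}$ converts \eqref{LE} into $\Delta w_k + w_k^{p_k}=0$ with $w_k(0)=1=\sup w_k$, and elliptic estimates deliver a $C^2_{\mathrm{loc}}$-limit $W$ which is a positive entire solution of $\Delta W + W^{p_\infty} = 0$ on either $\mathbb{R}^2$ or a half-plane with Dirichlet boundary. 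Both are ruled out by the Gidas--Spruck Liouville theorem (every $p_\infty>1$ is subcritical in the plane) and its half-space analog. It therefore suffices to treat $p_k\to\infty$.

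For $p_k\to\infty$ the correct rescaling is the Liouville one,
\[
\varepsilon_k := (p_k M_k^{p_k-1})^{-1/2}, \qquad v_k(y) := p_k\!\left(\frac{u_k(x_k+\varepsilon_k y)}{M_k}-1\right),
\]
which yields $v_k(0)=\sup v_k=0$ and $\Delta v_k + (1+v_k/p_k)_+^{p_k}=0$. Granting interior blow-up (the boundary case being parallel, via a half-plane Liouville theorem for $\Delta V + e^V=0$) together with local uniform bounds on $v_k$, one extracts $v_k \to V$ in $C^2_{\mathrm{loc}}(\mathbb{R}^2)$ solving $\Delta V + e^V = 0$ with $V(0)=\sup V=0$. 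If in addition $\int_{\mathbb{R}^2} e^V<\infty$, the Chen--Li classification forces $V$ to be the standard Liouville bubble with $\int e^V = 8\pi$, and a change of variables then produces the quantization
\[
p_k \int_{B_{R\varepsilon_k}(x_k)} u_k^{p_k+1}\,dx \;=\; (8\pi e + o(1))\, M_k^2.
\]
So a \emph{single} bubble already forces $p_k\int_\Omega u_k^{p_k+1}\,dx\geq c\,M_k^2\to\infty$. Consequently the theorem is equivalent to the uniform global estimate
\[
\sup_{p\geq p_0}\; p\int_\Omega u_p^{p+1}\,dx \;<\; \infty,
\]
and this estimate is where the real work of the paper must lie.

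The main obstacle is to prove this global estimate without any variational hypothesis on $u_p$, exactly the point where the argument of Ren--Wei \cite{ren1994two} for least-energy solutions fails to extend. My plan is to perform an iterative bubble extraction: identify a maximal family of concentration points $x_k^1,\ldots,x_k^{N_k}$ with scales $\varepsilon_k^i$ via a selection lemma; use Brezis--Merle $L^1$-alternative arguments together with sharp Harnack and Moser--Trudinger estimates to prove that $v_k$ stays uniformly bounded in annuli separating the bubbles, so that each bubble carries exactly $8\pi e$ of ``$p$-energy'' and no mass is lost between them; and finally combine these local contributions with a Pohozaev-type identity using a vector field $X$ on $\overline\Omega$ adapted to (not necessarily star-shaped) geometry, which relates $p_k\int u_k^{p_k+1}$ to the boundary flux $\int_{\partial\Omega}(\partial_\nu u_k)^2\,dS$ and caps both $N_k$ and the individual bubble contributions. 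The technically delicate step -- and where I expect the novelty of the paper to reside -- is showing, without any prior energy control, that the bubble-extraction procedure terminates in a number of steps bounded uniformly in $k$ and that the residue is controlled quantitatively all the way up to $\partial\Omega$; this requires a fine analysis of $u_k$ on annuli separating the bubbles from the boundary, and is the principal obstacle the proof must overcome.
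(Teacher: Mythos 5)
Your proposal correctly disposes of the bounded-exponent case and correctly identifies that a blow-up at scale $\varepsilon_k=(p_kM_k^{p_k-1})^{-1/2}$ would produce a Liouville bubble carrying $p$-energy $\approx 8\pi e\,M_k^2$. But the argument then reduces the theorem to the uniform bound $\sup_{p}\,p\int_\Omega u_p^{p+1}\,dx<\infty$, and for that bound you offer only a plan, not a proof. This is a genuine gap, and not a small one, for two reasons. First, the plan's final step is a Pohozaev identity with a vector field ``adapted to (not necessarily star-shaped) geometry''; for a general smooth domain no such field is available with the needed sign: any $X$ with $(X,\nu)>0$ on $\partial\Omega$ that is not a dilation generates interior terms $\int \partial_iX_j\,\partial_iu\,\partial_ju$ and $\int(\operatorname{div}X)u^{p+1}$ with no sign and no a priori control, which is exactly why the paper's Theorem~\ref{coro_IntgBnd} is stated only for strictly star-shaped domains. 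Second, the bubble-extraction-with-quantization machinery you invoke (Brezis--Merle alternatives, no loss of mass between bubbles, termination of the extraction) is, in the existing literature on this problem \cite{de2017asymptotic,de2018asymptotic,thizy2018sharp}, carried out only \emph{under the hypothesis} \eqref{intgCond}; you are proposing to prove \eqref{intgCond} by a method whose known implementations assume it. You flag this yourself as ``the principal obstacle the proof must overcome,'' which is an accurate self-assessment: the obstacle is not overcome.

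The paper's actual route is entirely different, far more elementary, and inverts your logical order: the $L^\infty$ bound is proved first, and the energy bound $p\int u^{p+1}\le C$ is deduced afterwards (and only for star-shaped domains). The only global integral input is the much weaker, $p$-free estimate $\int_\Omega u^p\,dx\le C$, obtained by testing against the first Dirichlet eigenfunction (which gives $\int u^p\phi\le C$) and upgrading this to a bound on $|u_\nu|$ on $\partial\Omega$ via the de Figueiredo--Lions--Nussbaum moving-plane sets $I_x$ and a barrier; the divergence theorem then gives $\int u^p\le C$. With this in hand, one writes $M=u(0)=\int_\Omega G\,u^p$ at the maximum point, discards the bounded harmonic part of $G$ using $\int u^p\le C$, and exploits the logarithmic singularity: the inhomogeneous Harnack inequality applied to $v=1-u(M^{-(p-1)/2}\cdot)/M$ shows $v\le C/p$ on $B_{\delta/\sqrt p}$, hence $u^p\ge c\,M^p$ there, and integrating $\log(1/|y|)$ over that shrinking ball yields $M+C\ge c\,M\log M$, i.e.\ $\log M\le C$. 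No blow-up limit, no classification of entire solutions, no quantization, and no Pohozaev identity enter the proof of Theorem~\ref{main}; this is what allows it to hold for arbitrary smooth domains. If you want to salvage your approach, the missing ingredient is precisely a substitute for the energy bound that does not pass through Pohozaev — and the paper's Green's-function argument is that substitute.
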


Observe that we make no assumption on $\Omega$ other than smoothness. That Theorem \ref{main} should hold for convex domains has been a common belief among experts. It may seem surprising that it is actually true for any smooth domain.

The proof of Theorem \ref{main} employs the Green's function representation formula, the Harnack inequality, and a rescaling argument. Additionally, we make use of the approach by de Figueiredo, Lions, and Nussbaum \cite{de1982priori}  to glean information about the near-boundary behaviour of solutions.
\bigskip

Next, we describe some applications of Theorem \ref{main}. In the original result of Ren and Wei the uniform boundedness of least-energy solutions follows (see the proof of Theorem 1.1 in  \cite{ren1994two}) as a consequence of the key integral bound
\begin{equation}\label{intgCond}
p \int_{\O} |\nabla u_p|^2 ~dx \leq C \qquad \text{for all large}\quad p,
\end{equation}
which in \cite{ren1994two} is shown to hold for least-energy solutions.

As a corollary to Theorem \ref{main}, we show that  \eqref{intgCond} holds for arbitrary solutions, provided $\Omega$ is  \emph{strictly star-shaped}.

\begin{theorem}\label{coro_IntgBnd} Let $\O\subset \real^2$ be a strictly star-shaped bounded domain with $C^2$-boundary and let $p_0>1$. There exists a constant $C$, depending only on $p_0$ and $\Omega$, such that for all $p\geq p_0$ any solution $u_p\in C^{2}(\Omega)\cap C^0(\overline{\O})$ of \eqref{LE} satisfies
\[
p \int_{\O} |\nabla u_p|^2 ~dx = p \int_{\O} u_p^{p+1} ~dx \leq C.
\]
\end{theorem}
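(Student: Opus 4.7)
The plan is to combine the Pohozaev identity for star-shaped domains with the uniform $L^\infty$-bound of Theorem \ref{main}, in a short bootstrap argument. Note first that the identity $\int_\O|\nabla u_p|^2\,dx = \int_\O u_p^{p+1}\,dx$ is immediate from testing the equation against $u_p$, so it suffices to bound $p\int_\O u_p^{p+1}\,dx$ from above.

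First I would pick a point $x_0\in\O$ with respect to which $\O$ is strictly star-shaped, so that $(x-x_0)\cdot\nu(x)\ge c_0>0$ on $\del\O$. Multiplying the equation by $(x-x_0)\cdot\nabla u_p$ and integrating by parts produces the two-dimensional Pohozaev identity
\[
\frac{2}{p+1}\int_\O u_p^{p+1}\,dx \;=\; \frac{1}{2}\int_{\del\O}((x-x_0)\cdot\nu)\,|\del_\nu u_p|^2\,d\sigma,
\]
which combined with the lower bound on $(x-x_0)\cdot\nu$ yields $\|\del_\nu u_p\|_{L^2(\del\O)}^2\le \frac{4}{c_0(p+1)}\int_\O u_p^{p+1}\,dx$.

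Next I would integrate the equation itself over $\O$: since $u_p=0$ on $\del\O$ and $\del_\nu u_p\le 0$ by Hopf's lemma, the divergence theorem gives $\int_\O u_p^p\,dx = \|\del_\nu u_p\|_{L^1(\del\O)}$, so by Cauchy--Schwarz and the previous bound,
\[
\int_\O u_p^p\,dx \;\le\; |\del\O|^{1/2}\,\|\del_\nu u_p\|_{L^2(\del\O)} \;\le\; \frac{C}{\sqrt{p+1}}\Big(\int_\O u_p^{p+1}\,dx\Big)^{1/2}.
\]
Chaining this with the trivial estimate $\int_\O u_p^{p+1}\le \|u_p\|_\infty \int_\O u_p^p$ and applying Theorem \ref{main} to bound $\|u_p\|_\infty\le C_0$ uniformly for $p\ge p_0$, one arrives at $\int_\O u_p^{p+1}\le C'/(p+1)$ and hence $p\int_\O u_p^{p+1}\le C''$.

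No step here is particularly delicate; all the nontrivial content sits inside Theorem \ref{main}, which the argument invokes as a black box. The strict star-shapedness hypothesis enters only to produce the positive lower bound on $(x-x_0)\cdot\nu$, without which the Pohozaev identity cannot be turned around to control $\|\del_\nu u_p\|_{L^2(\del\O)}$ by the energy $\int_\O u_p^{p+1}$.
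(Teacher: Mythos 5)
Your proposal is correct and follows essentially the same route as the paper: the two-dimensional Pohozaev identity combined with strict star-shapedness to control $\|\del_\nu u_p\|_{L^2(\del\O)}$, the divergence theorem plus Cauchy--Schwarz to bound $\int_\O u_p^p$, the trivial estimate $\int_\O u_p^{p+1}\le\|u_p\|_\infty\int_\O u_p^p$, and finally Theorem \ref{main} as a black box. The only (immaterial) difference is bookkeeping: the paper first closes the loop on $p\int_\O u_p^p$ and then multiplies by $\|u_p\|_\infty$, whereas you close it directly on $\int_\O u_p^{p+1}$.
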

The proof of this theorem uses the Pohozaev identity, which is the reason behind the requirement on the domain to be star-shaped.
\bigskip

In the recent work \cite{de2017asymptotic} De Marchis, Ianni and Pacella have performed a fairly complete asymptotic description of positive solutions of the Lane--Emden problem \eqref{LE} as $p\to \infty$, under the hypothesis that the integral quantity in \eqref{intgCond} has a limit. The paper \cite{de2017asymptotic} builds on the results of Ren and Wei \cite{ren1994two}, Adimurthi and Grossi \cite{adigrossi}, as well as the earlier work of the same authors \cite{dMIP} where they also obtain asymptotics for sign-changing solutions of the Lane-Emden equation (we refer to \cite{de2017asymptotic} for a full list of references and the history of this problem).

It is shown in \cite{de2017asymptotic} that if $\beta=\lim_{p\to\infty} p \int_{\O} |\nabla u_p|^2$ exists, then  $(u_p)$ concentrate at a finite number of points $\{x_j\}_{j=1}^k \subseteq \O$ as $p\to \infty$, whose locations satisfy an ``electrostatic force'' balance equation. Moreover, for a subsequence $p_i\to\infty$  one has (for a small enough $\d>0$)
\[
\lim_{p_i\to \infty} \|u_{p_i}\|_{L^{\infty}(B_{\d}(x_j))} = m_j, \quad j=1,2,\ldots, k \quad \text{with}\quad m_j\geq \sqrt{e} \quad \text{and}\quad  \beta = 8\pi \sum_{j=1}^k m_j^2
\]
and suitable rescalings of the solutions converge to a positive solution of the classical Liouville equation $-\Delta u = e^u$.
In \cite{de2017asymptotic} the authors conjecture that all $m_j=\sqrt{e}$, so that the asymptotic ``energy'' $\beta = (8\pi e)k$ is quantized.

In a recent preprint \cite{de2018asymptotic} by the three authors together with  Massimo Grossi, they present a proof of this conjecture (and thus a refined asymptotic description) under the sole assumption \eqref{intgCond}. A proof of the quantization conjecture, again under \eqref{intgCond}, was also independently announced by Thizy \cite{thizy2018sharp}.\medskip
%From personal exchanges and the presentation by F. de Marchis \cite{FdM} we learned that
%in a joint work in progress of the three authors with Massimo Grossi, they claim to have proved this conjecture, as well as all their previous results, under the sole assumption \eqref{intgCond}.

Our Theorem \ref{coro_IntgBnd} shows that the assumption \eqref{intgCond} is actually always satisfied in a star-shaped smooth domain. Thus the results in  \cite{de2017asymptotic, de2018asymptotic}, combined with Theorem \ref{coro_IntgBnd},
provide a full asymptotic description for solutions of \eqref{LE} in such domains.\medskip

Furthermore, from the presentations \cite{Pac} and \cite{FdM} we learned of a joint work in progress of the authors of \cite{de2018asymptotic}, in which they claim to have demonstrated that in convex domains, solutions of \eqref{LE} satisfying condition \eqref{intgCond} have Morse index equal to $1$ (as critical points of the functional $I[v]$), if $p$ is sufficiently large depending on the constant $C$ in \eqref{intgCond}. On the other hand, the proof of the main result of C.S.\! Lin \cite{lin1994uniqueness} implies that the Lane--Emden problem, when set in a bounded convex domain in the plane, has a unique positive solution of \mbox{Morse index $1$.} Thus, their work in combination with our Theorem \ref{coro_IntgBnd} would establish the uniqueness of positive solutions of the Lane--Emden problem in smooth  convex domains in the plane for all sufficiently large $p$.

%\begin{itemize}
%\item Variational in origin
% blowup of solutions as p tends to the critical Sobolev exponent...
%\item constrained has been studied extensively

%\item Long-standing conjecture of Gidas-Ni-Nirenberg uniqueness
%\item Result of CS Lin
%\item Recent results of De Marchis et al. under the condition
%\end{itemize}
%In this note we shall first establish a uniform in $p$ bound on the maximum of $u$ and then show that it implies condition * (the converse is also true due to a result of Ren and Wei \cite{ren1994two}).

%\begin{remark}
%In particular, the results above apply when we are dealing with a positive solution in a bounded, convex $C^2$--domain $\O%$. In fact, taking a little more care, it is not hard to show the estimate in Theorem \ref{main} remains true in a more general %class of domains: those which satisfy the exterior disk condition. That way Corollary \ref{coro_IntgBnd} can be extended to %arbitrary bounded convex domains.
%\end{remark}

\section{Proofs}

Everywhere in the sequel the letters $C,c$ (possibly with indices) will denote positive constants which depend only on $p_0$ and $\Omega$, and which may change from line to line.

We start with the following auxiliary proposition.

\begin{prop}\label{aux} Under the assumptions of  Theorem \ref{main}, there exist positive constants $\d$ depending only on  $\Omega$, and $C$ depending only on $p_0$ and $\Omega$, such that
\begin{itemize}
\item The maximum of $u=u_p$ in $\Omega$ is attained in $\{x\in \Omega:\text{dist}(x,\del\Omega)\geq \d\}$;
\item We have the bound $$\int_{\O}u^p ~dx \leq C.$$
\end{itemize}
\end{prop}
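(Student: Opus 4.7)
The strategy is to combine a moving-planes argument near $\partial\Omega$ with the classical first-eigenfunction test. I will first establish item (i) following the de Figueiredo--Lions--Nussbaum approach alluded to in the introduction. Since $\partial\Omega$ is $C^2$, for every boundary point $x_0 \in \partial\Omega$ one may implement the moving-planes method of Gidas--Ni--Nirenberg along hyperplanes orthogonal to the inward unit normal $\nu(x_0)$: comparing $u$ with its reflection across such a plane, the maximum principle and Hopf's lemma show that $u$ is strictly increasing along the normal segment $\{x_0+s\,\nu(x_0):0\le s\le \delta\}$. The ``maximal moving depth'' $\delta>0$ can be taken uniform in $x_0\in\partial\Omega$ thanks to the $C^2$-regularity of $\partial\Omega$. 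This monotonicity rules out any critical point of $u$, and in particular the global maximum, inside the strip $\{\text{dist}(\cdot,\partial\Omega)<\delta\}$, proving (i). As a by-product, the same reflection provides a bi-Lipschitz map $R$ with bounded Jacobian (depending only on $\Omega$) sending the inner half-strip $\{0<\text{dist}(\cdot,\partial\Omega)<\delta/2\}$ into $\{\delta/2<\text{dist}(\cdot,\partial\Omega)<\delta\}$ and satisfying $u(x)\le u(R(x))$.

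For item (ii), I would use the principal Dirichlet eigenfunction $\varphi_1>0$ of $-\Delta$ on $\Omega$, with eigenvalue $\lambda_1$. Multiplying the equation $-\Delta u=u^p$ by $\varphi_1$ and integrating by parts yields
\[
\int_\Omega u^p\,\varphi_1\,dx \;=\; \lambda_1 \int_\Omega u\,\varphi_1\,dx.
\]
Splitting $\varphi_1=\varphi_1^{1/p}\cdot \varphi_1^{(p-1)/p}$ and applying H\"older to the right-hand side then gives
\[
\int_\Omega u^p\,\varphi_1\,dx \;\le\; \lambda_1^{p/(p-1)}\int_\Omega \varphi_1\,dx,
\]
which for $p\ge p_0$ is bounded by a constant $C(p_0,\Omega)$. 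Because $\varphi_1\ge c>0$ on $\{\text{dist}(\cdot,\partial\Omega)\ge\delta/2\}$, this already controls $\int_{\{\text{dist}\ge\delta/2\}} u^p\,dx$. To handle the boundary strip I would invoke the reflection $R$ from step one: the pointwise inequality $u^p(x)\le u^p(R(x))$ on the strip combined with the change of variables $y=R(x)$ produces
\[
\int_{\{\text{dist}<\delta/2\}} u^p\,dx \;\le\; C'\int_{\{\delta/2\le \text{dist}<\delta\}} u^p\,dy,
\]
and (ii) follows by summing the two contributions.

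The main technical hurdle I expect is the uniform version of the first step: verifying that both the depth $\delta>0$ and the Jacobian bound on the reflection map $R$ can be chosen depending only on $\Omega$, uniformly in the solution $u_p$ and in $p\ge p_0$. For a $C^2$ domain this reduces to a standard tubular-neighborhood parametrization of $\partial\Omega$ combined with the Serrin--Gidas--Ni--Nirenberg reflection scheme applied to the nonlinearity $f(u)=u^p$ (which is locally Lipschitz and nonnegative on $u\ge 0$, with sign properties that do not deteriorate as $p\to\infty$); once these uniform quantitative statements are in hand, both conclusions of the proposition follow from the estimates outlined above.
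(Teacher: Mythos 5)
Your proposal is correct, and while item (i) rests on the same external input as the paper (the de Figueiredo--Lions--Nussbaum moving-planes statement near $\partial\Omega$), your treatment of item (ii) takes a genuinely different and somewhat leaner route. Two points of comparison. First, to bound $\int_\Omega u^p\varphi_1$, the paper truncates at the level $t_p=(2\lambda)^{1/(p-1)}$ where $u^p$ dominates $2\lambda u$, whereas you apply H\"older with exponents $p$ and $p/(p-1)$ to the identity $\int u^p\varphi_1=\lambda_1\int u\varphi_1$; both give $\int u^p\varphi_1\le C(p_0,\Omega)$ and both degenerate as $p\to1$, which is why $p\ge p_0>1$ is needed. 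Second, to upgrade this to $\int_\Omega u^p\le C$, the paper first extracts a pointwise bound $u\le C$ in the boundary strip, then runs a barrier argument to get $|u_\nu|\le C$ on $\partial\Omega$, and concludes via the divergence theorem $\int_\Omega u^p=\int_{\partial\Omega}u_\nu$; you instead split $\Omega$ into the strip and its complement and transfer the strip integral to the interior using the reflection comparison. Your route avoids the barrier and gradient estimate entirely (which the paper does not reuse elsewhere, so nothing is lost), at the price of having to justify the change of variables. One caveat on the latter: for a general non-convex $C^2$ domain, moving planes must be combined with the Kelvin transform (as in de Figueiredo--Lions--Nussbaum, and as the paper explicitly notes), and what one actually obtains is not literal monotonicity of $u$ along inward normals but the weaker statement that each $x$ in the strip admits a measurable set $I_x$ in $\{\mathrm{dist}(\cdot,\partial\Omega)\ge\delta\}$ with $|I_x|\ge\gamma$ and $u(x)\le u(\xi)$ on $I_x$. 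This does not harm your argument --- you can dispense with the bi-Lipschitz map $R$ altogether and simply integrate the pointwise bound $u^p(x)\le\gamma^{-1}\int_{I_x}u^p\,d\xi\le(\gamma c)^{-1}\int_\Omega u^p\varphi_1\,d\xi$ over the strip --- but as written, the claim of strict monotonicity along every normal segment is stronger than what the cited scheme delivers for non-convex domains, so you should either restrict that claim or phrase the comparison in the $I_x$ form.
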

\begin{proof}
By using the moving planes technique in combination with the Kelvin transform as in \cite[pp.223]{gidas1979symmetry}  or \cite[pp.45-52]{de1982priori}, one can show the existence of positive constants $\d$ and $\gamma$ (depending only on $\O$) such that
$$
 \text{ for each } x\in \{x\in \Omega, \text{dist}(x,\del\Omega)<2\d\} \quad  \text{ there exists a measurable set } I_x \text{ such that}
$$
\begin{equation}\label{MovPlane}
\begin{array}{rl}
(i) & I_x \subseteq \{x\in \Omega, \text{dist}(x,\del\Omega)\geq \d\}\\
(ii) & |I_x|\geq \gamma \\
(iii) & u(x)\leq u(\xi) \text{ for all } \xi\in I_x.
\end{array}•
\end{equation}
For details, please see the discussion after Theorem $1.2$ in \cite{de1982priori}. The statements in \eqref{MovPlane} correspond to $(8)^\prime$ in \cite{de1982priori}. Observe that the constant $C$ which appears in $(8)^\prime$(iii) in that paper can be taken to be one, since $n=2$. In particular, this implies that the maximum of $u$ is achieved sufficiently far away from the boundary -- in $\{\text{dist}(x,\del\Omega)\geq \d\}$.

For the second part of the proposition, we argue as in Steps 1 and 2 of \cite[Theorem 1.1]{de1982priori}, tracking the dependence in $p$. Let $\phi$ be the   eigenfunction of the Dirichlet Laplacian, associated with the lowest eigenvalue $\lambda=\lambda(\O)$:
\begin{equation}
-\D \phi = \lambda \phi \qquad \phi = 0 \text{ on } \del\Omega, \quad \text{normalized so that} \quad \|\phi\|_{L^1(\Omega)}=1.
\end{equation}
Multiplying \eqref{LE} by $\phi$ and integrating by parts we get
\begin{equation}\label{eigenMult}
\lambda \int_{\O} u\phi ~dx= \int_{\O} u^p \phi ~dx.
\end{equation}
Let $t_p = (2\lambda)^{1/(p-1)}$, so that we have $u^p > 2\lambda u$ whenever $u>t_p$. Now using \eqref{eigenMult}, we obtain
\begin{align*}
2\lambda \int_{\O} u\phi ~dx &= \int_{u>t_p} (2\lambda u)\phi ~dx +  2\lambda \int_{u\leq t_p}  u\phi ~dx \\ &\leq \int_{\Omega} u^p \phi ~dx + 2\lambda t_p \int_{\O}\phi ~dx
 = \lambda  \int_{\O} u\phi ~dx + 2\lambda t_p,
\end{align*}
which yields
\begin{equation*}
\int_{\Omega} u^p \phi ~dx = \lambda \int_{\O} u\phi ~dx  \leq 2\lambda t_p = (2\lambda)^{1+1/(p-1)}\leq \max\{1,  (2\lambda)^{1+1/(p_0-1)}\}=C(p_0,\Omega).
\end{equation*}
Furthermore, by \eqref{MovPlane} we see that for all $y\in \{\text{dist}(y,\del\Omega)<2\d\}$
\begin{equation*}
C\geq \int_{\O} u^p \phi ~dx \geq \int_{I_y} u^p \phi ~dx \geq \gamma u^p(y) \min_{I_y}\phi.
\end{equation*}
But by the  Harnack inequality $ \min_{I_y}\phi \geq c_1$ with $c_1$ depending only $\Omega$, as $\phi > 0$ in $\Omega$ and $I_y \subseteq \{\text{dist}(y,\del\Omega)>\d\}$. Therefore,
\begin{equation}
u^p(y)\leq C_0 \quad \text{for all} \quad y\in \{\text{dist}(y,\del\Omega)<2\d\}.
\end{equation}
A standard barrier argument applied to
$$0\le -\Delta u\le C_0\quad\mbox{ in } \{\text{dist}(y,\del\Omega)<2\d\}, \qquad u=0 \mbox{ on } \partial \Omega,$$ now gives us
\begin{equation}\label{bdryGrad}
|\nabla u|=|u_{\nu}|\leq C_1 \quad \text{on }\del\Omega,
\end{equation}
where $u_{\nu}$ denotes the outer normal derivative of $u$ at $\del \O$. From \eqref{bdryGrad} we infer  the desired estimate by using the Divergence Theorem:
\[
\int_{\O} u^p ~dx= \int_{\O} -\D u ~dx = \int_{\del \O} u_{\nu} ~ds \leq C_1 |\partial \Omega|=C.
\]
\end{proof}

We  now move to the proof of  Theorem \ref{main}.
\begin{proof}[Proof of Theorem \ref{main}]\

 For the sake of notational simplicity we will drop the subscript $p$ from $u_p$.

%The argument is based on an estimate using the Green's fucntion representation of solutions of Poisson's equation. Below $C,c$ refer to constants independent of $p$.

Let $M$ denote the maximum of $u$, $M = \max_{\overline{\O}} u$. From Proposition \ref{aux} we know that $M$ is attained at a point which is at least at  distance $\d$ away from the boundary $\del \O$. Without loss of generality, we pick its location to be the origin: $u(0)=M$.  Denote by $B_r\subseteq\real^2$ the open disk of radius $r$, centered at the origin. We may also assume that $\Omega \subseteq B_1$. Indeed if, in fact, $B_R$ were the smallest disk such that $B_R \supseteq \O$ with $R=R(\O)>1$, then we can use the scale-invariance of the equation: the function $\tilde{u}$ defined by
\[
\tilde{u}(x):= R^{2/(p-1)}u(Rx) \quad \text{for } x\in \tilde{\Omega}:= R^{-1}\O \subseteq B_1
\]
is also a non-negative solution of \eqref{LE}. If we know that $\|\tilde{u}\|_{L^{\infty}(\tilde{\O})} \leq C(p_0, \tilde{\O})= C(p_0, \O)$, then
\[
\|u\|_{L^{\infty}(\O)} = R^{-2/(p-1)}\|\tilde{u}\|_{L^{\infty}(\tilde{\O})} \leq C(p_0, \O),
\]
as well.

Let $G(y)$ denote the Green's function for the Laplacian $(-\D)$ with pole at the origin. Then we can write
$$
G(y) = -\frac{1}{2\pi}\log(|y|) - g(y)
$$
where $g$ is harmonic in $\O$ with boundary data $$g(y) = -\frac{1}{2\pi}\log(|y|).$$ Since $1\geq |y|\geq \d$ for all $y\in \del\O$, we have $\|g\|_{L^{\infty}(\del\O)} \leq C$, so that the Maximum Principle yields
\begin{equation}\label{gBnd}
0\leq g \leq C \quad \text{in}\quad  \O.
\end{equation}

%\[
%\begin{array}{l}
%\D v = 0 \quad  \text{in} \quad \O \\
%v = -\frac{1}{2\pi}\log(|y|) \quad \text{on}\quad \del \O
%\end{array}
%\]

Now, applying Green's representation formula (Theorem 2.2.12 in \cite{evansPDE}), we obtain
\begin{align}
M & =u(0) = \int_{\Omega} G(y) u^p(y) ~ dy = \frac{1}{2\pi} \int_{\Omega} \log(1/|y|)\, u^p ~ dy - \int_{\Omega} g u^p dy
 \notag \\ & \geq  \frac{1}{2\pi} \int_{\Omega} \log(1/|y|)\, u^p ~dy - C\int_{\Omega} u^p ~dy
\notag  \\ & \geq   \frac{1}{2\pi} \int_{\Omega} \log(1/|y|)\, u^p ~dy - C',\label{mainEst}
\end{align}
where in the last two lines we used \eqref{gBnd} and the bound on $\int u^p$ of Proposition \ref{aux}.

We want to exploit the logarithmic singularity. Instead of $u$, we shall work with
$$
v(x):=1-\frac{u(M^{-\frac{p-1}{2}}x)}{M} $$
which satisfies
\begin{equation}\label{eqn_v}
\begin{array}{l}
\Delta v = (1-v)^p \quad \text{in} \quad \Omega':= M^{\frac{p-1}{2}} \Omega  \subseteq B_{M^{(p-1)/2}}\\
 v\geq 0 \quad \text{with} \quad v(0) = 0.
\end{array}
\end{equation}
After the change of variables $y=x/M^{\frac{p-1}{2}}$, the estimate \eqref{mainEst} above reads
$$
M+ C' \geq \frac{M}{2\pi} \int_{\Omega'} \log(M^{\frac{p-1}{2}}/|x|) (1-v)^p ~dx
$$
or
$$
 \int_{\Omega'} \log(M^{\frac{p-1}{2}}/|x|) (1-v)^p ~dx \leq 2\pi + \frac{C^\prime}{M}.
$$
If $M<1$ we are done, so we may assume that $M\geq 1$ which guarantees $B_{\d}\subseteq \O'$. By the inhomogeneous Harnack inequality (see for instance Theorem 4.17 in \cite{HanLin}) applied to \eqref{eqn_v} and $0\le (1-v)^p\le1$, $v(0)=0$ we get
 $$\|v\|_{L^{\infty}(B_r)} \leq C r^2,
$$
for all $r\in (0,\delta)$, implying
$$
 v \leq \frac{C_1}{p} \quad \text{in}\quad B_{\d/\sqrt{p}},
$$
 so that
$$(1-v)^p\geq e^{-C_1}=:c_1\quad\mbox{ in }\quad B_{\d/\sqrt{p}}.
$$
 Since $\O' \subseteq B_{M^{(p-1)/2}}$, we have  $\log(M^{\frac{p-1}{2}}/|x|) \geq 0$ in $\Omega'$, thus
\begin{equation}\label{penutm}
2\pi+C^\prime\geq 2\pi + \frac{C^\prime}{M} \geq  \int_{B_{\d/\sqrt{p}}} \log(M^{\frac{p-1}{2}}/|x|) (1-v)^p ~dx \geq c_1 \int_{0}^{\d/\sqrt{p}} \log(M^{\frac{p-1}{2}}/r)\, r\, dr
\end{equation}
It is easy to check that for any $\alpha,\rho>0$
 $$\int_{0}^{\rho} \log(\alpha/r) r ~dr \geq  \frac{1}{2} \log(\alpha/\rho)\rho^2,$$
from which  we see that the right-hand side of \eqref{penutm}
\[
c_1 \int_{0}^{\d/\sqrt{p}} \log(M^{\frac{p-1}{2}}/r) r ~dr \geq \frac{c_2}{p} \log(\sqrt{p}M^{\frac{p-1}{2}}/\d) \geq c_3\left( \frac{\log p}{p} + \frac{p-1}{p} \log M  \right),
\]
implying the desired
\[
\log M \leq C.
\]
\end{proof}

Finally, we prove Theorem  \ref{coro_IntgBnd}.

\begin{proof}[Proof of Theorem \ref{coro_IntgBnd}]\
Without loss of generality, we may assume that $\O$ is star-shaped with respect to the origin:
\begin{equation}\label{star}
(x,\nu) \geq \alpha
\end{equation}
for some constant $\alpha=\alpha(\O)>0$, where $\nu$ denotes the unit outer normal to $\del \O$.
Our argument will be based on the well-known Pohozaev identity (see \cite{pohozaev} or \cite[pp.516]{evansPDE}): any solution of \eqref{LE} is such that
\begin{equation}\label{pohoz}
 \frac{4}{p+1}\int_{\O} u^{p+1} ~dx = \int_{\del \O} (x,\nu) u_{\nu}^2 ~ ds.
\end{equation}
We derive
\begin{align*}
\int_{\O} u^p ~dx = \int_{\del \O} -u_{\nu} ~ds & \leq |\del \O|^{1/2}\left(\int_{\del\O}u_{\nu}^2 ~ ds\right)^{1/2} \\
& \leq [\alpha^{-1}|\del \O|]^{1/2}\left(\int_{\del\O}(x,\nu) u_{\nu}^2 ~ ds\right)^{1/2} \\
& \leq  [\alpha^{-1}|\del \O|]^{1/2} \frac{2}{\sqrt{p+1}} \left(\int_{\O} u^{p+1} ~dx\right)^{1/2} \\
& \leq 2[\alpha^{-1} |\del \O|\|u\|_{L^{\infty}(\O)}]^{1/2} p^{-1/2} \left(\int_{\O} u^{p} ~dx\right)^{1/2}
\end{align*}
where the first inequality follows from Cauchy-Schwarz, the second is due to \eqref{star} and the third  follows fom \eqref{pohoz}. Thus,
\[
p \int_{\O} u^{p} ~dx \leq 4\alpha^{-1} |\del \O|\|u\|_{L^{\infty}(\O)}
\]
so that
\[
p \int_{\O} u^{p+1} ~dx \leq p \|u\|_{L^{\infty}(\O)}  \int_{\O} u^{p} ~dx \leq 4\alpha^{-1} |\del \O|\|u\|_{L^{\infty}(\O)}^2.
\]
Integration by parts and Theorem \ref{main} give us ultimately
\[
 p \int_{\O} |\nabla u|^2 ~dx = p \int_{\O} u(-\D u) ~dx =  p \int_{\O} u^{p+1} ~dx \leq 4 \alpha^{-1} |\del \O|\|u\|_{L^{\infty}(\O)}^2 \leq C,
\]
where $C$ depends only on $p_0$ and $\O$, by Theorem \ref{main}.

%We shall show inductively for $k\in \{0\}\cup \nat$ that
% \begin{equation}\label{iter}
%p^{t_k} \int u^p ~dx \leq b_k, \quad \text{where}
%\end{equation}
%\begin{equation*}
%t_{k+1} = (t_k+1)/2 \quad \text{with} \quad t_0 = 0 \quad \text{and} \quad   b_{k+1} = (\beta b_k)^{1/2} \quad %\text{with} \quad  b_0 = c
%\end{equation*}
% for some positive constants $c_0$ and $\beta$ depending only on $\O$ and $p_0$. Indeed, $k=0$ is the estimate in Proposition \ref{aux}. Assuming \eqref{iter} true for $k$, then
% If so, we would then have  $t_k \to 1$ as $k\to \infty$, and
%$$
%b_{k} = \big(\beta^{\sum_{l=1}^{k} \frac{1}{2^{l}}}\big)  (b_0)^{2^{-k}} \to \beta
%$$

\end{proof}\medskip

\paragraph{\emph{Acknowledgements.} } N.K. gratefully acknowledges the hospitality of the Department of Mathematics, PUC-Rio, where part of the work was conducted. N.K. was also partially supported by Proyecto FONDECYT Iniciaci\'on No. 11160981. B.S. was partially supported by CNPq grant 401207/2014-5.

%\begin{proof}[Proof of Corollary \ref{coro_weak}]\
%It is a straightforward application of Corollary \ref{coro_IntgBnd} and Theorem 1.1 of \cite{de2017asymptotic}.
%\end{proof}

\bibliography{LE2D_Biblio}
\end{document}